\theoremstyle{plain}
\newtheorem{lem}{Lemma}[section]
\newtheorem{theor}{Theorem}
\newtheorem{theorem}{Theorem}
\newtheorem{coroll}{Corollary}
\begin{document}

\begin{center}
\textsc{\Large Rational approximations to two irrational numbers\footnote[2]{This research is supported by the Russian Science Foundation under grant [19-11-00065] and performed in Khabarovsk Division of the Institute for Applied Mathematics, Far Eastern Branch, Russian Academy of Sciences.}.\\ }
\large Nikita Shulga
\end{center}

\begin{abstract}
For real $\xi$ we consider the irrationality measure function $\psi_\xi(t) = \min_{1\leqslant q \leqslant t, q\in\mathbb{Z}} || q\xi ||$. We prove that in the case $\alpha\pm\beta\notin\mathbb{Z}$ there exist arbitrary large values of $t$ with  $$\Bigl | \frac{1}{\psi_\alpha(t)} - \frac{1}{\psi_\beta(t)} \Bigl | \geqslant \sqrt5\left(1-\sqrt{\frac{\sqrt5-1}{2}}\right)t.$$
The constant on the right-hand side is optimal.

\end{abstract}

\section{Introduction}

For irrational number $\xi\in\mathbb{R}$ we consider irrationality measure function
$$ \psi_\xi(t) = \min_{1\le q\le t,\, q\in\mathbb{Z}} \| q\xi \| ,$$
where $|| . || $ denotes distance to the nearest integer.\\
Let
$$ Q_0\le Q_1<Q_2<\ldots<Q_n<Q_{n+1}<\ldots $$
be the sequence of denominators of convergents to $\xi$. It it a well-known fact (see \cite{perron} or \cite{schmidt}) that

\begin{equation}\label{minimum}
 \psi_\xi(t) = \| Q_n\xi \| \text{  for  } Q_n\le t < Q_{n+1}. 
 \end{equation}
It is clear for any $\xi\in\mathbb{R}$ and for every $t\ge1$ we have

\begin{equation}\label{minkow}
\psi_\xi(t) \le \frac{1}{t}. 
 \end{equation}
 
In \cite{kan}, Kan and Moshchevitin proved that for any two irrational numbers $\alpha, \beta$ satisfying $\alpha\pm\beta\notin\mathbb{Z}$ the difference 
$$ \psi_\alpha(t) - \phi_\beta(t)  $$
changes its sign infinitely many times as $t\to\infty$.

Obviously, it follows that the difference of their reciprocals 
\begin{equation}\label{d(t)}
 d(t)=d_{\alpha,\beta}(t) = \frac{1}{\psi_\beta(t)}-\frac{1}{\psi_\alpha(t)}
\end{equation}
also changes sign infinitely many times as $t\to\infty$.

In 2017 Dubickas \cite{dubic} proved the following result.\\

\begin{theorem}\label{TheoremA}	For any irrational numbers $\alpha, \beta$ satisfying $\alpha\pm\beta\notin\mathbb{Z}$ the sequence $d(n),n\in\mathbb{Z_+}$ is unbounded. 
 \end{theorem}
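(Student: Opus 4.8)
The plan is to argue by contradiction, assuming that $d(n)$ is bounded, say $|d(n)|\le C$ for all $n$, and to squeeze out of this a forbidden relation $\alpha\pm\beta\in\mathbb{Z}$. First I would rewrite
$$ d(n)=\frac{\psi_\alpha(n)-\psi_\beta(n)}{\psi_\alpha(n)\psi_\beta(n)}, $$
so that $|d(n)|\le C$ is exactly the multiplicative closeness $|\psi_\alpha(n)-\psi_\beta(n)|\le C\,\psi_\alpha(n)\psi_\beta(n)$, which by \eqref{minkow} is at most $C/n^2$. Thus the entire content of the theorem is that two staircase functions $\psi_\alpha,\psi_\beta$ cannot stay this close unless $\alpha\pm\beta\in\mathbb{Z}$. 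The virtue of this reformulation is that a gap of even a fixed multiplicative factor, $\psi_\alpha(n)/\psi_\beta(n)\ge 1+\varepsilon$ (or its reciprocal) at infinitely many integers $n$, already yields $|d(n)|\ge\varepsilon/\psi_\alpha(n)\ge\varepsilon n\to\infty$; so it suffices to forbid such a gap only in the boundedness regime.

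Next I would test the closeness at denominators. Fix $Q=Q_k^{(\alpha)}$; then by \eqref{minimum} we have $\psi_\alpha(Q)=\|Q\alpha\|$ exactly, while from the definition $\psi_\beta(Q)=\min_{1\le q\le Q}\|q\beta\|\le\|Q\beta\|$. The bound $|\psi_\alpha(Q)-\psi_\beta(Q)|\le C/Q^2$ then pins $\psi_\beta(Q)$ to within $C/Q^2$ of $\|Q\alpha\|\asymp 1/Q_{k+1}^{(\alpha)}$, which means that $\beta$ possesses an approximation of comparable quality $\asymp 1/Q_{k+1}^{(\alpha)}$ with some denominator $\le Q$. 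Running the symmetric argument at the denominators of $\beta$, I would deduce that the convergent denominators of $\alpha$ and $\beta$ must eventually coincide, $Q_k:=Q_k^{(\alpha)}=Q_k^{(\beta)}$, and that $\|Q_k\alpha\|=\|Q_k\beta\|$ for all large $k$. Once this exact coincidence is in hand, the arithmetic finish is clean: $\|Q_k\alpha\|=\|Q_k\beta\|$ forces $Q_k(\alpha-\beta)\in\mathbb{Z}$ or $Q_k(\alpha+\beta)\in\mathbb{Z}$ for each $k$; if the same sign occurs for two consecutive indices, then coprimality of consecutive convergent denominators gives $\alpha-\beta\in\mathbb{Z}$ or $\alpha+\beta\in\mathbb{Z}$; and if the signs alternate, the pair $Q_k(\alpha-\beta)\in\mathbb{Z}$, $Q_{k+1}(\alpha+\beta)\in\mathbb{Z}$ solves to $\alpha=\tfrac12\bigl(\tfrac{m}{Q_k}+\tfrac{m'}{Q_{k+1}}\bigr)\in\mathbb{Q}$, contradicting irrationality. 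In every case we reach a contradiction with $\alpha\pm\beta\notin\mathbb{Z}$.

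The hard part, and the step I expect to resist a short proof, is the middle one: upgrading the analytic closeness $|\psi_\alpha(n)-\psi_\beta(n)|\le C/n^2$ into the rigid combinatorial coincidence of the two denominator sequences together with $\|Q_k\alpha\|=\|Q_k\beta\|$. The natural tool is to track how the ratio $\psi_\alpha/\psi_\beta$ changes across a single unmatched jump: using the sharp two-sided estimate $\tfrac{1}{Q_{k+1}+Q_k}<\|Q_k\xi\|<\tfrac{1}{Q_{k+1}}$, a denominator of one number that is not a denominator of the other multiplies this ratio by a factor $\theta_{k-1}/\theta_k\in(a_{k+1},a_{k+1}+1)$ that is bounded away from $1$ whenever the relevant partial quotient is at least $2$. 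The one escape — a partial quotient equal to $1$ whose following partial quotient is very large, making that jump nearly trivial — must be closed by observing that the ensuing large partial quotient produces an even larger unmatched jump which $\beta$ is then forced to shadow, so that the denominators and partial quotients are driven to match after all. Making this telescoping argument quantitative and airtight, so that "almost coincide" genuinely becomes "coincide", is where the real work lies.
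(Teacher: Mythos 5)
There is a genuine gap, and it sits exactly where you say the ``real work lies''. (For context: the paper does not prove Theorem~\ref{TheoremA} at all --- it cites Dubickas --- and instead derives the stronger bound $|d(t)|\geqslant Kt$, resp.\ $Ct$, from Theorem~\ref{TheoremB} together with \eqref{minkow}. Your opening reduction is sound and is precisely that route: boundedness of $d(n)$ is equivalent to multiplicative closeness of $\psi_\alpha$ and $\psi_\beta$, and a multiplicative gap $\psi_\alpha(n)/\psi_\beta(n)\geqslant 1+\varepsilon$ at infinitely many $n$ gives $|d(n)|\geqslant \varepsilon/\psi_\alpha(n)\geqslant\varepsilon n\to\infty$.)

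The problem is your middle step and the endgame built on it. No finite bound $|d(n)|\leqslant C$ can yield the \emph{exact} equality $\|Q_k\alpha\|=\|Q_k\beta\|$: at $n=Q_k$ you only learn that $1/\psi_\beta(Q_k)$ lies within $C$ of $Q_k\alpha_{k+1}+Q_{k-1}$, an approximate condition from which no integrality of $Q_k(\alpha\mp\beta)$ can be extracted; so the clean arithmetic finish never becomes available. Worse, the intermediate claim that the denominator sequences are ``driven to match'' fails in the critical case: when both $\alpha$ and $\beta$ are equivalent to $\tau$ (all partial quotients eventually $1$), neither of your two mechanisms --- a partial quotient $\geqslant 2$, or a large partial quotient following a $1$ --- is ever available, and the paper's Case~2 analysis shows that what is actually forced there is strict \emph{interleaving} of the denominators ($\xi_{n-1}>\eta_s>\xi_n>\eta_{s+1}>\cdots$), not coincidence. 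The contradiction in that case has to come from a quantitative estimate (the paper splits $\xi_{n-1}/\xi_n=\alpha_{n+1}$ as $(\xi_{n-1}/\eta_s)\cdot(\eta_s/\xi_n)$ so that one factor is at least $\sqrt{\alpha_{n+1}}$, as in Lemma~\ref{xd}, plus the forbidden-subword combinatorics of Lemmas~\ref{conseq}, \ref{conseq=1}, \ref{qqtt}), and this is the entire substance of the theorem. Note also that your own heuristic already contains the correct fix: a single unmatched denominator multiplies the ratio $\psi_\alpha/\psi_\beta$ by $\alpha_{n+1}>\tau-\delta$ in the all-ones case, which is bounded away from $1$, so the right goal is to show a gap of size $1+\varepsilon$ recurs --- not to force coincidence. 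As written, the proposal is a correct reduction plus a declared intention for the essential step, and the specific path chosen for that step (exact coincidence) is a dead end.
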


To formulate further results, we need constants
 \begin{equation}\label{constTAU}
 \tau = \frac{\sqrt5 +1}{2},
  \end{equation}
  
  \begin{equation}\label{constPHI}
 \phi = \frac{\sqrt5 -1}{2},
  \end{equation} 
 
  \begin{equation}\label{constK}
 K=\sqrt\tau-1=0.2720^+,
  \end{equation}

  \begin{equation}\label{constC}
 C = K(\sqrt{\tau}+\tau^{-3/2})=\sqrt5(1-\sqrt{\phi})=0.47818^+.
 \end{equation}


In 2019 Moshchevitin \cite{mosh} proved the following

\begin{theorem}\label{TheoremB}		Let $\alpha$ and $\beta$ be the two irrational numbers. If $\alpha\pm\beta\notin\mathbb{Z}$, then for every $T\ge 1$ there exists $t\ge T$ such that

\begin{equation}\label{mosh1}
 | \psi_\alpha(t) - \psi_\beta(t) | \geqslant K\cdot \min( \psi_\alpha(t), \psi_\beta(t) ). 
\end{equation}
\end{theorem}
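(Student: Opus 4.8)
The plan is to pass to the logarithmic ratio and reduce \eqref{mosh1} to a statement about the jumps of two monotone step functions. Since $1+K=\sqrt\tau$, the inequality \eqref{mosh1} is equivalent to $\max(\psi_\alpha(t),\psi_\beta(t))\ge\sqrt\tau\,\min(\psi_\alpha(t),\psi_\beta(t))$, i.e. to $|f(t)|\ge\tfrac12\log\tau$, where $f(t)=\log\psi_\alpha(t)-\log\psi_\beta(t)$. By \eqref{minimum} both $\psi_\alpha$ and $\psi_\beta$ are non-increasing step functions, constant on the intervals between consecutive convergent denominators, so $f$ is a pure-jump function: it jumps \emph{down} at each convergent denominator $Q_{n+1}$ of $\alpha$ and \emph{up} at each convergent denominator $Q'_{m+1}$ of $\beta$.

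First I would record the size of these jumps. Writing $\alpha_n=[a_n;a_{n+1},\dots]$ for the complete quotients and using $\|Q_n\alpha\|=1/(Q_n\alpha_{n+1}+Q_{n-1})$ together with $Q_{n+1}=a_{n+1}Q_n+Q_{n-1}$, a short computation collapses the ratio of consecutive minima to a single complete quotient,
\[
\frac{\|Q_n\alpha\|}{\|Q_{n+1}\alpha\|}=\alpha_{n+2},
\]
so the down-jump of $f$ at $Q_{n+1}$ has size exactly $\log\alpha_{n+2}$, and likewise each up-jump has size $\log\beta_{m+2}$. The next observation is that every irrational number has complete quotients $\ge\tau$ infinitely often: if $a_n\ge2$ for infinitely many $n$ then $\alpha_n>2>\tau$, while if $a_n=1$ for all large $n$ then $\alpha_n=[1;1,1,\dots]=\tau$. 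Thus $\psi_\alpha$ (and $\psi_\beta$) have jumps of size $\ge\log\tau$ at arbitrarily large points, the extremal value $\tau$ occurring exactly for golden-ratio-like numbers — this is precisely the source of the constant $\sqrt\tau$ and explains its optimality.

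I would then argue by contradiction, assuming $|f(t)|<\tfrac12\log\tau$ for every $t\ge T$, so that $f$ is trapped in an open band of width $\log\tau$. A single jump whose two endpoints both lie in this band has size $<\log\tau$; since $\alpha$ has down-jumps of size $\ge\log\tau$ at arbitrarily large points $Q_{n+1}$, each such large jump must be neutralised by a \emph{simultaneous} up-jump of $f$. An up-jump occurs only at a convergent denominator of $\beta$, so $\beta$ would have to possess a convergent denominator at the very integer $Q_{n+1}$, carrying a complete quotient within a factor $\tau$ of $\alpha_{n+2}$. Iterating over the infinitely many large jumps of $\alpha$ (and symmetrically of $\beta$) forces $\alpha$ and $\beta$ to share infinitely many convergent denominators bearing matching large partial quotients.

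The main obstacle is to convert this forced alignment into a genuine contradiction. This is exactly where $\alpha\pm\beta\notin\mathbb Z$ — the hypothesis already underlying the Kan--Moshchevitin sign-change theorem — must be used: sharing denominators with matching tails pins down the continued fractions of $\alpha$ and $\beta$ so tightly that $\alpha\pm\beta$ is driven into $\mathbb Z$. The delicate point is handling the crossings created by \emph{small} jumps, where the single-jump estimate is too weak to react; the natural remedy is to work crossing by crossing, selecting for each large $T$ a crossing at which the overtaking function makes a jump of ratio $\ge\tau$ with no cancelling jump of the other function at the same integer, and then splitting that jump about the crossing value to obtain $\max/\min\ge\sqrt\tau$ on one of the two sides, which is \eqref{mosh1}. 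Optimality of $\sqrt\tau$ (equivalently of $K$) is then confirmed by a golden-ratio-type pair, for which every jump ratio equals $\tau$ and $\max/\min$ never exceeds $\sqrt\tau$.
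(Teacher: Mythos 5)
You should first note that the paper does not actually prove Theorem~\ref{TheoremB}: it is quoted from Moshchevitin's article \cite{mosh} without proof, and the closest material in this paper is the proof of the stronger Theorem~1 in Section~3, which uses the same circle of ideas. Measured against that, your opening is sound and well aligned: the reformulation $\max\ge\sqrt{\tau}\,\min$ via $1+K=\sqrt{\tau}$, the identity $\|Q_n\alpha\|/\|Q_{n+1}\alpha\|=\alpha_{n+2}$ (the paper's $\xi_{n-1}/\xi_n=\alpha_{n+1}$), the observation that complete quotients are $\ge\tau$ infinitely often, and the idea of splitting a jump of ratio $\ge\tau$ about a crossing to extract a factor $\sqrt{\tau}$ on one side (exactly Lemma~\ref{xd}) are all the right ingredients.

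The genuine gap is in how you close the contradiction. Your pivotal claim --- that being forced to ``share infinitely many convergent denominators bearing matching large partial quotients'' drives $\alpha\pm\beta$ into $\mathbb{Z}$ --- is false as stated. Coincidence of infinitely many \emph{single} denominators, even with comparable complete quotients there, imposes only finitely many conditions near each shared index and leaves the two continued fractions essentially free in between; it does not pin down $\alpha\pm\beta$. The correct finiteness statements (the paper's Lemmas~\ref{conseq} and~\ref{conseq=1}) concern coincidence of \emph{consecutive pairs} $(q_n,q_{n+1})=(t_m,t_{m+1})$, or of $(q_n,q_{n+2})=(t_{m+1},t_{m+2})$ with $a_{n+2}=1$, and deriving one of these configurations from the band hypothesis is precisely the combinatorial heart of the proof: one must classify how the two denominator sequences interleave between successive coincidences (the word $W$ on the alphabet $\{B,Q,T\}$ in Section~3) and rule out each pattern either by a quantitative estimate or by one of those lemmas. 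Your paragraph beginning ``The delicate point is handling the crossings created by small jumps'' describes what such an argument would have to achieve --- finding a crossing carried by an uncancelled jump of ratio $\ge\tau$ --- but does not show that such a crossing exists; the dangerous case is exactly the one where every large jump is cancelled by a simultaneous jump of the other number and the uncancelled jumps are all small, and that case is where all the work lies. As it stands the proposal is a correct strategy with its central step asserted rather than proved.
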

It was also proved that constant $K$ in \eqref{mosh1} is optimal.\\
Using Theorem \ref{TheoremB} and \eqref{minkow} Moshchevitin deduced stronger version of Theorem \ref{TheoremA}.\\

\begin{coroll}			Let $\alpha$ and $\beta$ be the two irrational numbers. If $\alpha\pm\beta\notin\mathbb{Z}$, then for every $T\ge 1$ there exists $t\ge T$ such
\begin{equation}\label{mosh2}
 \Bigl | \frac{1}{\psi_\alpha(t)} - \frac{1}{\psi_\beta(t)} \Bigl | \geqslant Kt, \text{\,\,\, with $K$ from \eqref{constK}} . 
\end{equation}
\end{coroll}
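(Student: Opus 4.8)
The plan is to deduce the corollary directly from Theorem~\ref{TheoremB} together with the elementary upper bound~\eqref{minkow}; no further analytic input is needed, since all the work of producing arbitrarily large $t$ with a genuine gap between the two irrationality measure functions is already carried out by Theorem~\ref{TheoremB}. Fix $T \geq 1$ and apply Theorem~\ref{TheoremB} to obtain a point $t \geq T$ at which $|\psi_\alpha(t) - \psi_\beta(t)| \geq K \cdot \min(\psi_\alpha(t), \psi_\beta(t))$. Abbreviating $a = \psi_\alpha(t)$ and $b = \psi_\beta(t)$, both of which are strictly positive because $\alpha$ and $\beta$ are irrational, I would start from the identity
$$\left| \frac{1}{a} - \frac{1}{b} \right| = \frac{|a - b|}{ab},$$
which is the bridge converting the additive estimate of Theorem~\ref{TheoremB} into a multiplicative statement about reciprocals.

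The key algebraic step is to insert the bound $|a - b| \geq K \min(a,b)$ and to observe the cancellation
$$\frac{K \min(a,b)}{ab} = \frac{K}{\max(a,b)},$$
which removes the smaller of the two factors and leaves only the reciprocal of the larger value. Finally I would apply~\eqref{minkow} to whichever of $\psi_\alpha(t), \psi_\beta(t)$ is larger; since both obey that bound we get $\max(a,b) \leq 1/t$, hence $1/\max(a,b) \geq t$. Chaining the three inequalities yields
$$\left| \frac{1}{\psi_\alpha(t)} - \frac{1}{\psi_\beta(t)} \right| \geq \frac{K}{\max(a,b)} \geq Kt,$$
which is precisely~\eqref{mosh2}.

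There is essentially no obstacle to overcome: the argument is a short formal manipulation and contributes nothing beyond repackaging Theorem~\ref{TheoremB}. The only point demanding a moment's attention is the direction of the cancellation — one must check that the $\min$ in the numerator pairs with the $ab$ in the denominator to produce $1/\max$ rather than $1/\min$, for otherwise the final appeal to~\eqref{minkow} would fail. Writing the difference of reciprocals as $|a - b|/(ab)$ at the outset makes this automatic. I would also remark that the deduction is deliberately lossy, since the crude estimate $\max(a,b) \leq 1/t$ throws away the information about the precise sizes of $a$ and $b$; sharpening the constant $K$ in~\eqref{mosh2} would require a more delicate treatment of that last inequality, but that refinement lies beyond what the present corollary asserts.
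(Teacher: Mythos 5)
Your proof is correct and follows exactly the route the paper indicates for this corollary: combining Theorem~\ref{TheoremB} with the bound~\eqref{minkow}, via the identity $|1/a-1/b|=|a-b|/(ab)$ and the cancellation $\min(a,b)/(ab)=1/\max(a,b)$. Nothing is missing, and your attention to which of $\min$ and $\max$ survives the cancellation is precisely the one point that needs care.
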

In this paper we give an improvement of this result with the best possible constant.

\begin{theor}
			1) Let $\alpha$ and $\beta$ be the two irrational numbers. If $\alpha\pm\beta\notin\mathbb{Z}$, then for every $T\ge 1$ there exists $t\ge T$ such
\begin{equation}\label{shul}
 \Bigl | \frac{1}{\psi_\alpha(t)} - \frac{1}{\psi_\beta(t)} \Bigl | \geqslant Ct, \text{\,\,\, where $C$ is defined in \eqref{constC}} . 
\end{equation}
2) Constant $C$ in \eqref{shul} is optimal, that is for any $\varepsilon>0$, there exist $\alpha$ and $\beta$ with $\alpha\pm\beta\notin\mathbb{Z}$ such that 
$$
\Bigl | \frac{1}{\psi_\alpha(t)} - \frac{1}{\psi_\beta(t)} \Bigl | \leqslant (C+\varepsilon) t
$$
for all $t$ large enough.
\end{theor}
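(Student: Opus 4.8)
The plan is to prove both parts separately, with the lower bound (part 1) resting on the machinery behind Theorem~\ref{TheoremB} and the optimality (part 2) requiring an explicit construction. For part 1, the natural strategy is to \emph{not} simply combine \eqref{mosh1} with the Minkowski bound \eqref{minkow} in the crude way the Corollary does, since that only yields the constant $K$. Instead, I would revisit the proof of Theorem~\ref{TheoremB} at the level of the continued fraction denominators. Fix a large $T$ and consider the moments $t$ where $\psi_\alpha$ and $\psi_\beta$ are governed by their respective convergents via \eqref{minimum}. The key observation is that $\frac{1}{\psi_\alpha(t)} - \frac{1}{\psi_\beta(t)} = \frac{\psi_\beta(t) - \psi_\alpha(t)}{\psi_\alpha(t)\psi_\beta(t)}$, so to make this large one wants the numerator comparable to $\min(\psi_\alpha,\psi_\beta)$ \emph{and} the product $\psi_\alpha\psi_\beta$ in the denominator as small as possible relative to $t^{-2}$. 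The constant $C = K(\sqrt\tau + \tau^{-3/2})$ in \eqref{constC} strongly suggests that one chooses the witnessing point $t$ not at a denominator $Q_n$ itself but at an intermediate value, where $\psi_\xi(t)=\|Q_n\xi\|$ stays fixed while $t$ ranges up to $Q_{n+1}$; optimizing the choice of $t$ in this range against the golden-ratio extremal configuration produces the extra factor $\sqrt\tau + \tau^{-3/2}$ beyond $K$.

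Concretely, I would argue as follows. By the analysis underlying \eqref{mosh1}, for arbitrarily large $T$ there is a denominator-type scale at which $|\psi_\alpha - \psi_\beta| \geqslant K \min(\psi_\alpha,\psi_\beta)$; say $\psi_\beta \geqslant (1+K)\psi_\alpha$ at the relevant convergent $Q_n$ of $\alpha$. Rather than evaluating $d(t)$ at $Q_n$, I would push $t$ upward toward the next denominator while $\psi_\alpha(t)$ remains equal to $\|Q_n\alpha\|$. Writing $\|Q_n\alpha\| \approx 1/(\tau Q_n)$ in the extremal (worst) case and tracking how large $t$ may be taken before the configuration breaks, the quantity $\tfrac{1}{\psi_\alpha(t)} - \tfrac{1}{\psi_\beta(t)} \geqslant \tfrac{K\psi_\alpha(t)}{\psi_\alpha(t)\psi_\beta(t)} = \tfrac{K}{\psi_\beta(t)}$ gets multiplied by $t$, and the ratio $t/\psi_\beta(t)^{-1}$ — i.e.\ how far $1/\psi_\beta(t)$ exceeds $t$ — contributes precisely the factor $\sqrt\tau + \tau^{-3/2}$ when the continued fractions are locally golden. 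The bookkeeping here is the delicate part: one must verify that the sign of $d(t)$ is the favorable one throughout the chosen range and that the inequalities, which are asymptotically sharp, hold for genuinely arbitrarily large $t$ rather than just for one scale.

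For part 2 (optimality) the plan is explicit construction. I would build $\alpha$ and $\beta$ whose continued fraction expansions are eventually all $1$'s — i.e.\ $\alpha$ and $\beta$ are noble numbers equivalent to the golden ratio — arranged so that their convergent denominators $Q_n(\alpha)$ and $Q_n(\beta)$ interlace in a controlled, staggered fashion dictated by the extremal ratio $\tau$. The point is that among numbers with bounded partial quotients the worst case for $\psi_\xi$ is the golden ratio, where $\|Q_n\xi\| \sim \tau^{-1}Q_n^{-1}$, and one designs $\beta$ so that each of its convergents sits at the geometrically optimal position relative to those of $\alpha$. One then checks, using \eqref{minimum} on each interval $[Q_n,Q_{n+1})$, that $\bigl|\tfrac{1}{\psi_\alpha(t)}-\tfrac{1}{\psi_\beta(t)}\bigr| \leqslant (C+\varepsilon)t$ for all large $t$, by piecewise computation on each such interval and taking the envelope. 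The main obstacle I anticipate is precisely this matching in part 1: converting the multiplicative gap of \eqref{mosh1} at a single denominator into the sharp additive-slope bound $Ct$ requires exploiting the freedom in choosing $t$ within a convergent interval and proving that the golden-ratio configuration is genuinely the extremal one for this finer quantity — the same configuration must then be realized in part 2, so the two halves of the argument should be designed in tandem, with the constant $C = \sqrt5(1-\sqrt\phi)$ emerging as the shared optimum of a single one-variable optimization over the position of $t$ in $[Q_n,Q_{n+1})$.
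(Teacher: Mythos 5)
Your overall picture --- that the extremal configuration has both numbers equivalent to the golden ratio with denominators interlaced at ratio roughly $\sqrt\tau$, and that part 2 is handled by explicitly constructing such a pair --- matches the paper, and your part 2 plan is essentially the paper's construction ($\alpha=\tau$ and $\beta$ with eventually-all-one partial quotients whose denominators $s_n$ satisfy $s_n/F_n\to\sqrt\tau$ up to $\varepsilon$). But for part 1 there are two genuine problems. First, the mechanism you propose for upgrading $K$ to $C=K(\sqrt\tau+\tau^{-3/2})$ --- choosing the witness $t$ strictly inside an interval $[Q_n,Q_{n+1})$ on which $\psi_\xi$ is constant --- cannot work: $d(t)$ from \eqref{d(t)} is a step function, constant on the intervals between consecutive elements of the union of the two denominator sequences, so $|d(t)|/t$ is maximal at the left endpoint of each such interval, and pushing $t$ upward only weakens the inequality $|d(t)|\ge Ct$. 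In the paper the witness is always a denominator, and the extra factor $\sqrt\tau+\tau^{-3/2}=\sqrt5/\sqrt\tau$ comes from the exact identity $1/\psi_\beta(t_s)=t_s(\beta_{s+1}+t_{s-1}/t_s)$, which in the golden case equals $(\tau+\phi)t_s=\sqrt5\,t_s$ --- i.e., from replacing the crude bound \eqref{minkow} by the true value of $1/(\psi(t)t)$ at a denominator --- combined with the multiplicative-gap observation (Lemma \ref{xd}) that if $\eta_s\in(\xi_n,\xi_{n-1})$ then one of $\xi_{n-1}/\eta_s$, $\eta_s/\xi_n$ is at least $\sqrt{\alpha_{n+1}}$, giving a relative gap $1-1/\sqrt{\alpha_{n+1}}=1-\sqrt\phi$ (not $K$) in the golden case.

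Second, and more seriously, the heart of the proof is showing that this golden, strictly alternating configuration is \emph{forced}: one must rule out every other way the two denominator sequences can interlace (coinciding consecutive pairs, runs of several denominators of one number between two of the other, partial quotients $\ge2$, and so on). The paper does this by assuming $|d(t)|<Ct$ for all large $t$ and running a long elimination over the combinatorial word recording the merged denominator sequence, using telescoping identities for $d(q_{n-1})-d(q_n)$ and Fibonacci/continuant computations; each non-extremal pattern yields $|d(t)|\ge t$ or $\ge\tfrac{4}{3}t$ or similar at some denominator, far above $Ct$. Your proposal acknowledges this as ``delicate bookkeeping'' but supplies none of it, and the two facts you want to combine (a multiplicative gap of $\sqrt\tau$ and the lower bound $1/(\psi(t)t)\ge\sqrt5$) do not hold simultaneously at a common $t$ for general $\alpha,\beta$ without that elimination --- for instance $1/(\psi_\alpha(q_n)q_n)=\alpha_{n+1}+q_{n-1}/q_n$ can be arbitrarily close to $1$ for unfavourable partial quotients. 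So as written the argument has a gap exactly where the difficulty lies.
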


\section{Auxiliary results}
Throughout, we consider $\alpha\notin\mathbb{Q}$ and its continued fraction expansion

\begin{equation}
\alpha =  a_0 + \cfrac{1}{a_1+\cfrac{1}{a_2+\cdots}}=[a_0;a_1,a_2,\ldots], \,\,\, a_j \in \mathbb{Z}_+
\end{equation}
By $\alpha_r$ we denote a tail of continued fraction
\begin{equation}
\alpha_r=  [a_r;a_{r+1},a_{r+2},\ldots]
\end{equation}
Similarly, let $\beta = [b_0; b_1,b_2,\ldots]$ and $\beta_m = [b_m; b_{m+1}, b_{m+2}, \ldots]$.\\
For irrational numbers $\alpha$ and $\beta$ we denote the denominators of their convergents by $q_n$ and $t_m$ respectively.\\
Define 
$$\xi_n = \psi_\alpha( q_n ) \text{\,\,\,\,\, and \,\,\,\,\,\,} \eta_s = \psi_\beta ( t_s ).$$
It is well known that 
\begin{equation}
q_n\alpha - \frac{(-1)^n}{q_n\alpha_{n+1}+q_{n-1}}\in\mathbb{Z}.
\end{equation}
Using this fact as well as \eqref{minimum} we see that

\begin{equation}\label{defal}
\frac{1}{\psi_\alpha(n)} = q_r\alpha_{r+1}+q_{r-1}=q_{r+1}+\frac{q_r}{\alpha_{r+2}}
\end{equation}
where  $r\geqslant0$ is the largest integer satisfying $q_r\leqslant n$.

Similarly,
\begin{equation}\label{defbe}
\frac{1}{\psi_\beta(n)} = t_l\beta_{l+1}+t_{l-1}=t_{l+1}+\frac{t_l}{\beta_{l+2}}
\end{equation}
where  $l\geqslant0$ is the largest integer satisfying $t_l\leqslant n$.

From now on we consider only irrational numbers $\alpha$ and $\beta$ with $\alpha\pm\beta\notin\mathbb{Z}$.\\
The following two simple lemmas in fact were used in \cite{mosh}. Here we formulate them without a proof.
\begin{lem}\label{conseq}
There exist only finitely many pairs $(q_n,q_{n+1})=(t_m,t_{m+1})$.
\end{lem}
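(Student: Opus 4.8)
The plan is to argue by contradiction, exploiting the classical fact that a single pair of consecutive convergent denominators almost determines the initial block of partial quotients. Recall the reversal identity (see \cite{perron}): for $n\geqslant 1$ the coprime pair $(q_n,q_{n+1})$ satisfies
\[
\frac{q_{n+1}}{q_n}=[a_{n+1};a_n,\ldots,a_2,a_1].
\]
Thus the ordered pair $(q_n,q_{n+1})$ fixes the rational number $q_{n+1}/q_n$, and the reversed string $(a_{n+1},\ldots,a_1)$ is one of its continued-fraction expansions. Since a rational has exactly two such expansions — one ending in an entry $\geqslant 2$ and one ending in the entry $1$ — the pair determines $(a_1,\ldots,a_{n+1})$ up to this single ambiguity at the final (i.e.\ lowest-index) entry.

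Suppose, for contradiction, that $(q_n,q_{n+1})=(t_m,t_{m+1})$ for infinitely many indices. As $q_{n+1}$ is strictly increasing, these indices form an unbounded set, and for each the reversed strings of $\alpha$ and of $\beta$ are expansions of one common rational $q_{n+1}/q_n=t_{m+1}/t_m$. Each of the two strings realizes one of the two admissible expansions, so by pigeonhole one of the following holds for infinitely many $n$: either (Case A) both strings realize the same expansion, or (Case B) they realize the two different ones.

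In Case A the strings coincide, forcing $n=m$ and $a_j=b_j$ for all $1\leqslant j\leqslant n+1$; letting $n\to\infty$ gives $\alpha_1=\beta_1$, hence $\alpha-\beta=a_0-b_0\in\mathbb{Z}$. In Case B the two expansions differ by the relation $[\,\ldots,c,1\,]=[\,\ldots,c+1\,]$ at the bottom, which after reversal forces, for all large $k$, the shift $b_{k+1}=a_k$ together with $b_1=1$, $b_2=a_1-1$; using the elementary identity $[0;1,a_1-1,a_2,a_3,\ldots]=1-[0;a_1,a_2,a_3,\ldots]$ this yields $\beta=b_0+1-(\alpha-a_0)$, i.e.\ $\alpha+\beta=a_0+b_0+1\in\mathbb{Z}$. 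Either conclusion contradicts the standing hypothesis $\alpha\pm\beta\notin\mathbb{Z}$, so only finitely many pairs exist. I expect Case B — and the recognition that it produces the $+$ relation rather than the $-$ relation — to be the main point; it is precisely here that the full hypothesis $\alpha\pm\beta\notin\mathbb{Z}$ (both signs) is used, whereas Case A alone would only need $\alpha-\beta\notin\mathbb{Z}$.
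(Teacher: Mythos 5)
The paper states this lemma without proof, deferring to \cite{mosh}, and your argument is precisely the standard one used there: the mirror formula $q_{n+1}/q_n=[a_{n+1};a_n,\ldots,a_1]$ plus the two-expansion ambiguity of a rational, with the two cases yielding $\alpha-\beta\in\mathbb{Z}$ and $\alpha+\beta\in\mathbb{Z}$ respectively, so the proof is correct. Two minor presentational points: in Case B you should also record the symmetric sub-case in which the roles of $\alpha$ and $\beta$ are exchanged (it gives the same conclusion $\alpha+\beta\in\mathbb{Z}$), and the pigeonhole is not really needed, since which of the two admissible expansions each number realizes is fixed once and for all by whether $a_1\geqslant 2$ or $a_1=1$ (respectively $b_1$).
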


\begin{lem}\label{conseq=1}
There exist only finitely many pairs $(q_n,q_{n+2})=(t_{m+1},t_{m+2})$ with $a_{n+2}=1$.
\end{lem}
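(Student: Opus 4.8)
The plan is to reduce the statement to Lemma~\ref{conseq}, which already guarantees finiteness of coincidences of \emph{consecutive} denominators. The key observation is that the hypothesis $a_{n+2}=1$ collapses the two-step coincidence $(q_n,q_{n+2})=(t_{m+1},t_{m+2})$ into a one-step coincidence $(q_{n-1},q_n)=(t_m,t_{m+1})$ of exactly the type handled there. No information about the numerators $p_n$, $\tilde p_m$ is needed; the whole argument runs on the denominator recurrences alone.

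Concretely, I would start from $q_{n+2}=a_{n+2}q_{n+1}+q_n$. Since $a_{n+2}=1$, this reads $q_{n+2}=q_{n+1}+q_n$, so $q_{n+1}=q_{n+2}-q_n$. Combining this with $q_{n+1}=a_{n+1}q_n+q_{n-1}$ yields
$$
q_{n-1}=q_{n+2}-(a_{n+1}+1)q_n, \qquad 0\le q_{n-1}<q_n ,
$$
so that $q_{n-1}$ is precisely the residue of $q_{n+2}$ modulo $q_n$. On the $\beta$-side, the defining recurrence $t_{m+2}=b_{m+2}t_{m+1}+t_m$ with $0\le t_m<t_{m+1}$ exhibits $t_m$ as the residue of $t_{m+2}$ modulo $t_{m+1}$. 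Because $q_{n+2}=t_{m+2}$ and $q_n=t_{m+1}$, these two residues agree, and therefore $q_{n-1}=t_m$ (as a byproduct one also gets $b_{m+2}=a_{n+1}+1$, which I will not use).

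It follows that every pair $(n,m)$ contributing to the statement produces a pair $(q_{n-1},q_n)=(t_m,t_{m+1})$ of equal consecutive denominators, i.e.\ one of the pairs counted in Lemma~\ref{conseq}. Since the index assignment $(n,m)\mapsto(n-1,m)$ is injective, finiteness of the consecutive-coincidence family forces finiteness of the original family, and the proof is complete. The only delicate point — and the closest thing to an obstacle — is the strictness $q_{n-1}<q_n$ and $t_m<t_{m+1}$ needed to identify the residues unambiguously: this can fail only in the degenerate small-index situations (for instance $n\le 1$, or $m=0$, or $q_n=1$), which account for finitely many pairs and are discarded at the outset. For all remaining $(n,m)$ the residue identification is valid, and the reduction to Lemma~\ref{conseq} goes through verbatim; the hypothesis $\alpha\pm\beta\notin\mathbb{Z}$ enters only implicitly, through that lemma.
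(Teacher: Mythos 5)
Your proof is correct and takes essentially the intended route: the paper states this lemma without proof (deferring to \cite{mosh}), and the standard argument is exactly your reduction --- since $a_{n+2}=1$ gives $q_{n+2}=(a_{n+1}+1)q_n+q_{n-1}$, uniqueness of Euclidean division of $q_{n+2}=t_{m+2}$ by $q_n=t_{m+1}$ identifies $q_{n-1}=t_m$ (and $b_{m+2}=a_{n+1}+1$), yielding a consecutive coincidence $(q_{n-1},q_n)=(t_m,t_{m+1})$ of which there are only finitely many by Lemma~\ref{conseq}. Your discarding of the degenerate small-index cases (where $q_{n-1}<q_n$ or $t_m<t_{m+1}$ could fail, e.g.\ $q_0=q_1=1$) is the right bookkeeping, so the reduction is complete.
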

Our next auxiliary result is the following
\begin{lem}\label{xd}
\,
\\
If $\eta_s \in (\xi_n, \xi_{n-1}) $, then either
$$ \frac{1}{\eta_s}  - \frac{1}{\xi_{n-1}} \ge  t_s \left( \beta_{s+1} + \frac{t_{s-1}}{t_s} \right) \left( 1- \frac{1}{\sqrt{\alpha_{n+1}}} \right)$$
or
$$ \frac{1}{\xi_n}  - \frac{1}{\eta_s} \ge  q_n \left( \alpha_{n+1} + \frac{q_{n-1}}{q_n} \right) \left( 1- \frac{1}{\sqrt{\alpha_{n+1}}} \right).$$

\end{lem}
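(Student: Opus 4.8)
The plan is to strip away the apparent complexity of the two right-hand sides and reduce the whole statement to a single dichotomy about the position of $1/\eta_s$. First I would simplify the factors on the right using \eqref{defal} and \eqref{defbe}: since $t_s\left(\beta_{s+1}+\tfrac{t_{s-1}}{t_s}\right)=t_s\beta_{s+1}+t_{s-1}=\tfrac{1}{\eta_s}$ and $q_n\left(\alpha_{n+1}+\tfrac{q_{n-1}}{q_n}\right)=q_n\alpha_{n+1}+q_{n-1}=\tfrac{1}{\xi_n}$, the two claimed inequalities are nothing but
\begin{equation*}
\frac{1}{\eta_s}-\frac{1}{\xi_{n-1}}\ge\frac{1}{\eta_s}\left(1-\frac{1}{\sqrt{\alpha_{n+1}}}\right)\qquad\text{and}\qquad\frac{1}{\xi_n}-\frac{1}{\eta_s}\ge\frac{1}{\xi_n}\left(1-\frac{1}{\sqrt{\alpha_{n+1}}}\right).
\end{equation*}

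The second ingredient is the multiplicative identity $\tfrac{1}{\xi_n}=\alpha_{n+1}\cdot\tfrac{1}{\xi_{n-1}}$, which I would derive by writing $\tfrac{1}{\xi_{n-1}}$ in its second form from \eqref{defal}, namely $q_n+\tfrac{q_{n-1}}{\alpha_{n+1}}$, and multiplying through by $\alpha_{n+1}$ to land on $q_n\alpha_{n+1}+q_{n-1}=\tfrac{1}{\xi_n}$. Thus the reciprocals $\tfrac{1}{\xi_{n-1}}<\tfrac{1}{\xi_n}$ differ precisely by the factor $\alpha_{n+1}>1$, and their geometric mean $g:=\sqrt{\alpha_{n+1}}\cdot\tfrac{1}{\xi_{n-1}}$ sits strictly between them.

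To finish, I would feed in the hypothesis $\eta_s\in(\xi_n,\xi_{n-1})$, which after inversion reads $\tfrac{1}{\xi_{n-1}}<\tfrac{1}{\eta_s}<\tfrac{1}{\xi_n}$, and observe that the two inequalities above are each a one-step rearrangement away from a comparison with $g$: the first is equivalent to $\tfrac{1}{\eta_s}\ge g$ and the second to $\tfrac{1}{\eta_s}\le g$, where the identity $\tfrac{1}{\xi_n}=\alpha_{n+1}/\xi_{n-1}$ collapses the threshold of the second inequality onto the same $g$. Since any real number is either $\ge g$ or $\le g$, at least one of the two inequalities must hold, which is the assertion.

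I expect essentially no obstacle here once the two simplifications are spotted: the estimate is exact and the crossover lands exactly at the geometric mean, so no inequality chasing is required. The only point worth flagging is the role of the square root, as the exponent $\tfrac12$ is precisely what balances the two sides symmetrically at $g$; this is the choice that later makes the constant $C$ of \eqref{constC} come out optimal when the present lemma is combined with the two preceding counting lemmas.
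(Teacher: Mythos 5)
Your proof is correct and is essentially the paper's own argument: both rest on the identity $\xi_{n-1}/\xi_n=\alpha_{n+1}$ together with \eqref{defal}--\eqref{defbe}, and your dichotomy ``$1/\eta_s$ lies on one side or the other of the geometric mean $g=\sqrt{\alpha_{n+1}}/\xi_{n-1}$'' is just a rephrasing of the paper's observation that since $\frac{\xi_{n-1}}{\eta_s}\cdot\frac{\eta_s}{\xi_n}=\alpha_{n+1}$, at least one factor is $\ge\sqrt{\alpha_{n+1}}$. No gaps.
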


\begin{proof}
It is a well-known fact, that for any $n\in\mathbb{N}$ one has
$$ \frac{ \xi_{n-1}}{\xi_{n}} = \alpha_{n+1} .$$
As $$  \frac{ \xi_{n-1}}{\eta_s}  \cdot  \frac{ \eta_s}{\xi_{n}} = \frac{ \xi_{n-1}}{\xi_{n}} = \alpha_{n+1}, $$
we see that either $ \frac{ \xi_{n-1}}{\eta_s} \ge \sqrt{ \alpha_{n+1} }$ or $ \frac{ \eta_s}{\xi_{n}}  \ge \sqrt{ \alpha_{n+1} }$.

In the first case, one has 
$$ \frac{1}{\eta_s}  - \frac{1}{\xi_{n-1}} \ge  \frac{1}{\eta_s}  -  \frac{1}{\sqrt{\alpha_{n+1}} \eta_s} = \frac{1}{\eta_s} \left( 1- \frac{1}{\sqrt{\alpha_{n+1}}} \right) =  t_s \left( \beta_{s+1} + \frac{t_{s-1}}{t_s} \right) \left( 1- \frac{1}{\sqrt{\alpha_{n+1}}} \right). $$
Last equality is due to \eqref{defbe}.\\
Similarly, in the case $ \frac{ \eta_s}{\xi_{n}}  \ge \sqrt{ \alpha_{n+1} }$, using \eqref{defal} we get
$$ \frac{1}{\xi_n}  - \frac{1}{\eta_s} \ge  \frac{1}{\xi_n}   -  \frac{1}{\sqrt{\alpha_{n+1}} \xi_n} = \frac{1}{\xi_n} \left( 1- \frac{1}{\sqrt{\alpha_{n+1}}} \right) =  q_n \left( \alpha_{n+1} + \frac{q_{n-1}}{q_n} \right) \left( 1- \frac{1}{\sqrt{\alpha_{n+1}}} \right).$$
\end{proof}

\,
By $F_n$ we denote  $n$th Fibonacci number, that is 
$$F_{1} = F_2=1, F_{n+1}=F_n+F_{n-1}.$$
By $\langle a_1, \ldots, a_n \rangle $ we denote a denominator of the continued fraction $[0;a_1,\ldots, a_n]$.
Using this notation, we recall that
\begin{equation}\label{classic} \langle a_1,\ldots, a_n, b_1,\ldots, b_k \rangle = \langle a_1,\ldots a_n \rangle \langle b_1\ldots b_k \rangle + \langle a_1,\ldots a_{n-1} \rangle \langle b_2\ldots b_k \rangle.
\end{equation}

\section{Proof of the main inequality}
In this section we prove statement 1) of Theorem 1.

For a contradiction, assume that for some $\alpha$ and $\beta$ with $\alpha\pm\beta\notin\mathbb{Z}$ there exists $T\in\mathbb{Z}_+$ such that for $d(t)$ defined in \eqref{d(t)} we have

\begin{equation}\label{contr}
|d(t)| < Ct 
\end{equation}
for each $t>T $.

\begin{lem}\label{lemma=1}
Suppose that $$q_{n-1}\leqslant t_{m-1} < q_n < t_{m}$$ 
and \eqref{contr} holds for all $t\ge t_{m-1} $, then
\begin{equation}\label{first}
a_{n+1}=1.
\end{equation}
Similarly, if $$t_{m-1}\leqslant q_{n-1} < t_m  < q_n$$
and \eqref{contr} holds for all $t\ge q_{n-1} $, then
\begin{equation}\label{second}
b_{m+1}=1.
\end{equation}
\end{lem}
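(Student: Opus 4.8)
The plan is to argue by contradiction using the assumption \eqref{contr}, combined with Lemma \ref{xd} applied at the right index. Consider the first configuration $q_{n-1}\leqslant t_{m-1} < q_n < t_m$. For $t$ in the window $[q_{n-1}, q_n)$ the relevant convergent denominator for $\alpha$ is $q_{n-1}$ and for $\beta$ it is $t_{m-1}$, so $\psi_\alpha(t)=\xi_{n-1}$ and $\psi_\beta$ is governed by $t_{m-1}$. The key observation is that at $t$ just below $q_n$ we have $\xi_n \le \xi_{n-1}$ and we are comparing reciprocals. I would first locate an index where $\eta_s$ falls in the gap $(\xi_n,\xi_{n-1})$, which is exactly the hypothesis needed to invoke Lemma \ref{xd}; the ordering $q_{n-1}\le t_{m-1}<q_n<t_m$ forces the corresponding $\eta$-value to sit inside this interval because $\psi_\beta$ changes more slowly across this range.

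Once Lemma \ref{xd} applies, it yields one of two lower bounds on $|d(t)|$, and each bound contains the factor $\left(1-\tfrac{1}{\sqrt{\alpha_{n+1}}}\right)$. The strategy is to evaluate the contradiction hypothesis \eqref{contr} precisely at $t=q_n$ (or $t$ slightly less), where $\tfrac{1}{\psi_\alpha(t)}$ jumps. Plugging $t=q_n$ into \eqref{contr} gives $|d(q_n)| < C q_n$. On the other hand, the relevant branch of Lemma \ref{xd} produces a lower bound roughly of the form $q_n\,\alpha_{n+1}\bigl(1-\alpha_{n+1}^{-1/2}\bigr)$ (using $\alpha_{n+1}+\tfrac{q_{n-1}}{q_n}\ge \alpha_{n+1}$, or more carefully the full factor). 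I would then compare this lower bound against $C q_n$: if $\alpha_{n+1}\ge 2$, the function $x\mapsto x\bigl(1-x^{-1/2}\bigr)$ evaluated at $x=\alpha_{n+1}$ already exceeds $C$, contradicting \eqref{contr}. Since $C=K(\sqrt\tau+\tau^{-3/2})$ was calibrated exactly at the golden-ratio value $\alpha_{n+1}=\tau$ (equivalently $a_{n+1}=1$), any larger partial quotient overshoots $C$. Therefore $a_{n+1}=1$ is forced.

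The symmetric second statement, where $t_{m-1}\leqslant q_{n-1}<t_m<q_n$ and we conclude $b_{m+1}=1$, follows by interchanging the roles of $\alpha$ and $\beta$: apply Lemma \ref{xd} with the indices swapped (using that $\tfrac{\eta_{s-1}}{\eta_s}=\beta_{s+1}$ by the analogous well-known identity), evaluate \eqref{contr} at $t=t_m$, and run the identical estimate on $x\mapsto x(1-x^{-1/2})$ with $x=\beta_{m+1}$.

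The main obstacle I anticipate is the bookkeeping to guarantee that Lemma \ref{xd}'s hypothesis $\eta_s\in(\xi_n,\xi_{n-1})$ is actually met under the denominator ordering given, and to pin down which of the two alternatives in Lemma \ref{xd} is the relevant one at the evaluation point. Specifically, I must verify that the $\beta$-value landing in the interval is consistent with the inequalities $q_{n-1}\le t_{m-1}<q_n<t_m$, and that the factor $\bigl(\alpha_{n+1}+\tfrac{q_{n-1}}{q_n}\bigr)$ (rather than just $\alpha_{n+1}$) does not weaken the bound below $C$ when $a_{n+1}\ge 2$. This is where the precise constant $C=\sqrt5(1-\sqrt\phi)$ matters: the threshold comparison must be sharp enough that $a_{n+1}=1$ is the \emph{only} escape, and I expect to need the monotonicity of $x\mapsto x(1-x^{-1/2})$ together with the exact value of $C$ at $x=\tau$ to close the argument without slack.
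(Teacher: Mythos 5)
There is a genuine gap: the route through Lemma \ref{xd} does not work for this lemma, and the two obstacles you flag at the end are real and unresolved. First, the hypothesis $\eta_{m-1}\in(\xi_n,\xi_{n-1})$ is \emph{not} forced by the ordering $q_{n-1}\le t_{m-1}<q_n<t_m$. From \eqref{defal} and \eqref{defbe} one only gets $q_n<1/\xi_{n-1}<q_n+q_{n-1}$, $q_{n+1}<1/\xi_n<q_{n+1}+q_n$ and $t_m<1/\eta_{m-1}<t_m+t_{m-1}$; if $t_m$ is only slightly larger than $q_n$ one can have $\eta_{m-1}>\xi_{n-1}$, and if $t_m$ is much larger than $q_{n+1}$ one has $\eta_{m-1}<\xi_n$. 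Second, even where Lemma \ref{xd} applies, its first alternative bounds $d(t_{m-1})$ below by $\frac{1}{\eta_{m-1}}\bigl(1-\frac{1}{\sqrt{\alpha_{n+1}}}\bigr)$, and $1/\eta_{m-1}$ can be barely above $q_n$; with $\alpha_{n+1}$ near $2$ this gives only about $0.293\,q_n$, which does not contradict $|d|<Cq_n=0.478^{+}q_n$. So one of the two branches of Lemma \ref{xd} simply fails to produce the contradiction, and you cannot conclude $a_{n+1}=1$ this way. (Lemma \ref{xd} is the right tool only later, in Case 2, where both numbers are equivalent to $\tau$ and the interlacing $\xi_{n-1}>\eta_s>\xi_n>\eta_{s+1}>\dots$ has been separately established.)

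The paper's actual proof is a much more elementary two-point differencing argument that you should look for: evaluate $d$ at $t=t_{m-1}$ and at $t=q_n$. Because $t_{m-1}<q_n<t_m$, the $\beta$-contribution $t_m+\frac{t_{m-1}}{\beta_{m+1}}$ is identical at both points and cancels in $d(t_{m-1})-d(q_n)$, leaving exactly the jump of $1/\psi_\alpha$, namely $q_{n+1}+\frac{q_n}{\alpha_{n+2}}-q_n-\frac{q_{n-1}}{\alpha_{n+1}}>q_n(a_{n+1}-1)$. The hypothesis \eqref{contr} at the two points gives $d(t_{m-1})-d(q_n)<2Cq_n$, whence $a_{n+1}<2C+1<2$. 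No localization of $\eta$-values in an interval and no square-root inequality is needed.
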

\begin{proof}
Let us prove \eqref{first}. Using \eqref{d(t)}, \eqref{defal} and \eqref{defbe} we obtain
$$d(t_{m-1}) = t_m + \frac{t_{m-1}}{\beta_{m+1}} - q_n - \frac{q_{n-1}}{\alpha_{n+1}}$$
and
$$d(q_n) = t_m + \frac{t_{m-1}}{\beta_{m+1}} - q_{n+1} - \frac{q_{n}}{\alpha_{n+2}}.$$
Subtracting the second formula from the first one we get
$$ 2C q_n > d(t_{m-1}) - d(q_n) = q_{n+1} + \frac{q_{n}}{\alpha_{n+2}} - q_n - \frac{q_{n-1}}{\alpha_{n+1}} $$
$$ > q_n \Bigl( a_{n+1} -1 + \frac{1}{\alpha_{n+2}} \Bigl) + q_{n-1} \Bigl ( 1- \frac{1}{\alpha_{n+1}} \Bigl) >q_n ( a_{n+1} -1 ).$$
Hence, $a_{n+1} < 2C + 1 = 1.95636^+$. As $a_{n+1} \in\mathbb{N}$, we see that $a_{n+1}=1$. The proof of \eqref{second} is the same.
\end{proof}
To prove statement 1) of Theorem 1 we consider two cases.\\
Case 1. Either there exist infinitely many $n$, such that $a_{n+1}\ge 2$ or infinitely many $m$, such that $b_{m+1} \ge 2$. \\
Case 2. Both $\alpha$ and $\beta$ are equivalent to $\tau$.\\

In Case 1 we consider several subcases.\\
1.1) There exist infinitely many $a_{n+1}\ge 2$, such that $q_{n}$ is not a denominator of convergent to $\beta$ or there exist infinitely many $b_{m+1} \ge 2$, such that $t_{m}$ is not a denominator of convergent to $\alpha$.\\
Without loss of generality suppose that this condition holds for partial quotients and denominators of $\alpha$.\\
Consider $a_{n+1}\ge2$, such that $q_n$ is not a denominator of convergent to $\beta$ and \eqref{contr} being true for $t\ge q_{n-1}$.\\
 Consider indices $n$ and $m$ such that $t_{m-1}<q_n<t_m$. Suppose that one of the 
 $$
 q_{n-1}=t_{m-1} < q_n < t_{m}
\text{\,\,\,\,or\,\,\,\,\,\,\,} q_{n-1}< t_{m-1}  < q_n < t_{m} $$ 
  is true. Then by Lemma \ref{lemma=1} $a_{n+1}$ has to be equal to 1. There is a contradiction with $a_{n+1} \ge 2$.  Hence the only possible case is 
 \begin{equation}\label{1case}
 t_{m-1}  < q_{n-1} < q_n < t_{m}. 
 \end{equation}

Now using \eqref{d(t)} and \eqref{1case} we obtain
$$
d(q_{n-1}) = t_{s+1} + \frac{t_s}{\beta_{s+2}} - q_n -\frac{q_{n-1}}{\alpha_{n+1}}
$$
and
$$
d(q_{n}) = t_{s+1} + \frac{t_s}{\beta_{s+2}} - q_{n+1} -\frac{q_{n}}{\alpha_{n+2}}.
$$
By subtracting the second formula from the first we get
$$
d(q_{n-1}) - d(q_{n})  =  q_{n+1} + \frac{q_{n}}{\alpha_{n+2}} - q_n -\frac{q_{n-1}}{\alpha_{n+1}}
$$
$$
= q_n \Bigl( a_{n+1} - 1 + \frac{1}{\alpha_{n+2}} \Bigl) + q_{n-1} \Bigl( 1-\frac{1}{\alpha_{n+1}}  \Bigl) > q_n.
$$
The last inequality is due to $a_{n+1}\ge 2$. As \eqref{contr} is true for $t\ge q_{n-1}$, we can estimate
$$
d(q_{n-1}) - d(q_{n})  < C q_{n-1} + C q_n < 2C q_n.
$$
We come to a contradiction $ 2C q_n > q_n$, as $2C = 0.95636^+$.\\
1.2) There exists $N$, such that for every $a_{n+1} \ge 2$ with $n>N$, $q_n$ is a denominator of convergent to $\beta$, for every $b_{n+1}\ge2$ with $n>N$, $t_n$ is a denominator of convergent to $\alpha$ and \eqref{contr} is true for $t \ge \min(q_N,t_N)$.\\
Without loss of generality suppose that there exist infinitely many $a_{n+1}\ge2$ for $\alpha$.\\
Consider $a_{n+1}\ge2$ with $n>N$. Then $q_n = t_s$ for some index $s$.

Next, for the local convenience we introduce the union $U=D_{\alpha} \cup D_{\beta}$ of two sequences $D_\alpha = \{ q_0 = 1 \le q_1 < q_2 < \ldots \}$ and $D_\beta = \{ t_0 = 1 \le t_1 < t_2 < \ldots \} $. We construct an infinite word $W$ on an alphabet $\{ B^*_*, Q^*, T_* \}$, where $*$ will match some indices from $D_\alpha$ and $D_\beta$, by following procedure. The first element of $U$ is 1. It belongs to both sequences, so we start our infinite word with $B^0_0$. Then, we write the letter $Q^j$ if the next element of $U$ is $q_j$ from the set $D_\alpha$, but not from the set $D_\beta$. We write letter $T_i$ if it is $t_i$ from the set $D_\beta$, but not from the set $D_\alpha$. Also, we write $B^j_i$ if the next element of $U$ is present in both $D_\alpha$ and $D_\beta$ as $q_j$ and $t_i$ respectively.

As $q_n=t_s$, the word $W$ contains letter $B^n_s$. We consider possible cases of the location of the next letter $B^*_*$.
\\
1.2.1) The word $W$ can not contain $B^n_s B^{n+1}_{s+1}$ as a subword infinitely many times, since it is forbidden by Lemma \ref{conseq}.\\
1.2.2) The word $W$ can not contain subwords $B^n_s Q^{n+1} B^{n+2}_{s+1}$ or $B^n_s T_{s+1} B^{n+1}_{s+2}$ infinitely many times as it is forbidden by Lemma \ref{lemma=1} and Lemma \ref{conseq=1}. Indeed, by Lemma \ref{lemma=1}, if $W$ has a subword $B^n_s Q^{n+1}$ or $B^n_s T_{s+1} $, then $a_{n+2}=1$ or $b_{s+2}=1$ respectively. Both cases are covered by Lemma \ref{conseq=1}, which states that there exist only finitely many subwords $B^n_s Q^{n+1} B^{n+2}_{s+1}$ or $B^n_s T_{s+1} B^{n+1}_{s+2}$ with $a_{n+2}=1$ or $b_{s+2}=1$ respectively. So there are not such subwords for $n,s$ large enough.\\
1.2.3a) Let the word $W$ contain a subword $B^n_s Q^{n+1} \ldots Q^{n+k-1} B^{n+k}_{s+1}$ with $k\ge3$. Then 
$$t_{s+1}\ge q_{n+3} = q_{n+2} + q_{n+1}=2q_{n+1}+q_n$$ and by \eqref{d(t)} we have 
$$
d(q_n)=t_{s+1} - q_{n+1} + \frac{t_s}{\beta_{s+2}}-\frac{q_n}{\alpha_{n+2}}
$$

$$
\ge q_{n+3} - q_{n+1} + q_n \Bigl( \frac{1}{\beta_{s+2}}-\frac{1}{\alpha_{n+2}} \Bigl)
$$

$$
=q_{n+1} + q_n \Bigl( 1+ \frac{1}{\beta_{s+2}}-\frac{1}{\alpha_{n+2}} \Bigl) > q_{n+1} > a_{n+1} q_n \ge 2 q_n.
$$
As $|d(q_n)|< C q_n$, where  $C = \sqrt5(1-\sqrt{\phi})=0.47818^+$, we come to a contradiction with $d(q_n)>2q_n$.\\
1.2.3b) Let the word $W$ contain a subword $B^n_s T_{s+1} \ldots T_{s+m-1} B^{n+1}_{s+m}$ with $m\ge3$. Then 
$$q_{n+1}\ge t_{s+3} = t_{s+2} + t_{s+1}=2t_{s+1}+t_s$$
by \eqref{d(t)} we have 
$$
-d(t_s)=q_{n+1} -t_{s+1}  + \frac{q_n}{\alpha_{n+2}} - \frac{t_s}{\beta_{s+2}}
$$

$$
\ge t_{s+3} - t_{s+1} + t_s \Bigl( \frac{1}{\beta_{s+2}}-\frac{1}{\alpha_{n+2}} \Bigl)
$$

$$
=t_{s+1} + t_s \Bigl( 1+ \frac{1}{\beta_{s+2}}-\frac{1}{\alpha_{n+2}} \Bigl) > t_{s+1} > t_s.
$$
As $|d(t_s)|< C t_s$, where  $C = \sqrt5(1-\sqrt{\phi})=0.47818^+$, we come to a contradiction with $-d(t_s)>t_s$.\\
1.2.4) The word $W$ contains subword $B^n_s  \ldots B^{n+k+1}_{s+m+1}$, where between $B^n_s$ and  $B^{n+k+1}_{s+m+1}$ there are $k\ge1$ letters $Q^*$ and $m\ge1$ letters $T_*$ in any order.

We know that 
$$
 \langle a_1, \ldots , a_n \rangle =  q_n = t_s = \langle b_1, \ldots , b_s \rangle 
 $$
$$\text{and}$$
$$
 \langle a_1, \ldots, a_{n+k+1} \rangle = q_{n+k+1} = t_{s+m+1} =  \langle b_1, \ldots, b_{s+m+1} \rangle .
$$
By the conditions of the case 1.2), if there is a partial quotient $a_{n+1}\ge2$ or $b_{n+1}\ge2$ for $n>N$, then the word $W$ contains a letter $B^{n}_*$ or $B^*_n$ respectively. As we consider letter $B^n_s$ along with the closest to it letter $B^*_*$, we deduce that there are only partial quotients equal to $1$ in sequences $(a_{n+2},\ldots,a_{n+k+1})$ and $(b_{s+2},\ldots,b_{s+m+1})$. So we have
$$
q_{n+k+1} = \langle a_1, \ldots, a_n, a_{n+1}, \underbrace{1, \ldots, 1}_{k\ge1 \text{ times}} \rangle \text{, \,\,\, where \,\,\,}  a_{n+1}\ge 2
$$
and
$$
t_{s+m+1} = \langle b_1, \ldots, b_s, b_{s+1}, \underbrace{1, \ldots, 1}_{m\ge1 \text{ times}}  \rangle .
$$
Using formula \eqref{classic}, we have
$$
\langle a_1, \ldots a_{n+1} \rangle \langle \underbrace{1, \ldots, 1}_{k \text{ times}} \rangle + 
\langle a_1, \ldots a_{n} \rangle \langle \underbrace{1, \ldots, 1}_{k-1 \text{ times}} \rangle 
$$
$$
=\langle b_1, \ldots b_{s+1} \rangle \langle \underbrace{1, \ldots, 1}_{m \text{ times}} \rangle + 
\langle b_1, \ldots b_{s} \rangle \langle \underbrace{1, \ldots, 1}_{m-1 \text{ times}} \rangle .
$$
Or, for short,

$$
q_{n+1}F_{k+1} + q_n F_{k} = t_{s+1} F_{m+1} + t_s F_m .
$$
From this equality we can express $t_{s+1}$ as 
\begin{equation}\label{t_{s+1}}
t_{s+1} = q_{n+1} \frac{F_{k+1}}{F_{m+1}} + q_n \frac{F_k-F_m}{F_{m+1}}.
\end{equation}
Now, if $t_{s+1} > q_{n+1}$, then $m\le k-1$. Indeed, assume that $m \ge k$. Then from \eqref{t_{s+1}} we have
$$
 t_{s+1} = q_{n+1} \frac{F_{k+1}}{F_{m+1}} + q_n \frac{F_k-F_m}{F_{m+1}} \le 
q_{n+1} \frac{F_{m+1}}{F_{m+1}} + q_n \frac{F_m-F_m}{F_{m+1}} =  q_{n+1} .
$$
Next we estimate tails of continued fractions of $\alpha$ and $\beta$ as
\begin{equation}\label{tailA}
\frac{1}{\beta_{s+2} }= [b_{s+2}, b_{s+3},\ldots] > \frac{1}{b_{s+2}+1}= \frac{1}{2},
\end{equation}
\begin{equation}\label{tailB}
\frac{1}{\alpha_{n+2} }= [a_{n+2}, a_{n+3},\ldots] < \frac{1}{a_{n+2}+\frac{1}{a_{n+3}+1
}} = \frac{1}{1+\frac{1}{1+1}} =\frac{2}{3}.
\end{equation}
We apply \eqref{t_{s+1}},\eqref{tailA}, \eqref{tailB}, the fact that $m\le k-1$ and $q_{n+1}>a_{n+1}q_n$ to get
$$
d(q_n)=t_{s+1} - q_{n+1} + \frac{t_s}{\beta_{s+2}}-\frac{q_n}{\alpha_{n+2}}
$$
$$
=q_{n+1} \Bigl( \frac{F_{k+1} - F_{m+1}}{F_{m+1}} \Bigl) + q_n \Bigl( \frac{F_k-F_m}{F_{m+1}} + \frac{1}{\beta_{s+2}}-\frac{1}{\alpha_{n+2}}  \Bigl)
$$
$$
\ge (a_{n+1}-1)q_n\frac{F_m}{F_{m+1}}+ q_n \Bigl( 1 + \frac{1}{\beta_{s+2}}-\frac{1}{\alpha_{n+2}} \Bigl) 
>  \frac{3a_{n+1}+2}{6} q_n \ge  \frac{4}{3} q_n .
$$
As before, we come to a contradiction with $d(q_n)< C q_n = 0.47818^+ q_n $.\\
If $t_{s+1} < q_{n+1}$, then we use 
$$
t_{s+1} = q_{n+1} \frac{F_{k+1}}{F_{m+1}} + q_n \frac{F_k-F_m}{F_{m+1}},
$$
$$
m \ge k+1,
$$
$$
\frac{1}{\alpha_{n+2} }= [a_{n+2}, a_{n+3},\ldots] > \frac{1}{a_{n+2}+1}= \frac{1}{2},
$$
$$
\frac{1}{\beta_{s+2} }= [b_{s+2}, b_{s+3},\ldots] < \frac{1}{b_{s+2}+\frac{1}{b_{s+3}+1
}} = \frac{1}{1+\frac{1}{1+1}} =\frac{2}{3}
$$
to get
$$
C q_n > - d(q_n)=  q_{n+1} - t_{s+1} +\frac{q_n}{\alpha_{n+2}}  - \frac{t_s}{\beta_{s+2}}$$
$$ = q_{n+1} \frac{F_{m+1}-F_{k+1}}{F_{m+1}} + q_n \left( \frac{F_m -F_k}{F_{m+1}} + \frac{1}{\alpha_{n+2}}-\frac{1}{\beta_{s+2}} \right)$$
$$ >a_{n+1} q_n \frac{ F_{m-1}}{F_{m+1}} +q_n \left( \frac{F_{m-2}}{F_{m+1}}-\frac{1}{6} \right) 
 >q_n \left ( \frac{ 2F_{m-1}+F_{m-2}}{F_{m+1}}-\frac{1}{6} \right)=  \frac{5}{6} q_n .
$$
2) There exists $N_0$, such that for every $n,m > N_0$ one has $a_{n+1} = 1$ and $b_{m+1}=1$. 
This means that $\beta_{n+2} = \alpha_{n+2} = \tau $ for any $n > N_0$. Let us construct a word $W$ as in the case 1.2).

\begin{lem}\label{qqtt}There are only finitely many subwords $Q^n Q^{n+1}$ or $T_s T_{s+1}$ in word $W$ under the conditions of the case 2).
\end{lem}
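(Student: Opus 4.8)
The plan is to argue by descent, reducing the whole matter to a rigidity property of integer sequences obeying the Fibonacci recurrence. First observe what a subword $Q^nQ^{n+1}$ records: $q_n$ and $q_{n+1}$ are consecutive elements of $U$, both lying in $D_\alpha\setminus D_\beta$. Writing $t_s$ for the largest $\beta$-denominator below $q_n$, this forces
$$ t_s < q_n < q_{n+1} < t_{s+1}, $$
with all inequalities strict, since neither $q_n$ nor $q_{n+1}$ is a $\beta$-denominator and no $\beta$-denominator lies between them. The case of a subword $T_sT_{s+1}$ is symmetric under interchanging the roles of $\alpha$ and $\beta$, so I would carry out the argument only for $Q^nQ^{n+1}$.

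Next I would introduce the integer sequence $g_k = t_{s+k} - q_{n+k}$. Since we are in Case 2, every partial quotient of index exceeding $N_0$ equals $1$, so in that range both $q$ and $t$ obey $x_{j+1}=x_j+x_{j-1}$; hence $g_{k+1}=g_k+g_{k-1}$ whenever $n+k,\,s+k>N_0$. The displayed inequalities give $g_0=t_s-q_n<0$ and $g_1=t_{s+1}-q_{n+1}>0$, and both are nonzero precisely because $q_n,q_{n+1}\notin D_\beta$.

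The heart of the proof is the downward iteration $g_{k-1}=g_{k+1}-g_k$. One checks that the signs of $g_0,g_{-1},g_{-2},\dots$ strictly alternate, so no $g_{-k}$ vanishes and $|g_{-k}|\geqslant 1$; because consecutive terms then have opposite signs, the recurrence turns into $|g_{-(k+1)}|=|g_{-k}|+|g_{-(k-1)}|$, i.e. the magnitudes grow by the Fibonacci law, whence $|g_{-k}|\geqslant F_{k+1}$, of order $\tau^{k}$. On the other hand $|g_{-k}|=|t_{s-k}-q_{n-k}|\leqslant\max(t_{s-k},q_{n-k})$, and these denominators shrink as we descend. The two estimates are compatible only for boundedly many steps: running the descent as far as Case 2 permits, until the indices $s-k,\,n-k$ fall to $N_0$, the left-hand side is of order $\tau^{\min(n,s)-N_0}$ while the right-hand side is a denominator of index $O(1)$, hence an absolute constant. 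This bounds $\min(n,s)$, and therefore $n$.

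The one delicate point is the bookkeeping that legitimizes the descent and keeps $n,s$ comparable. For the backward recurrence to be valid I must keep both indices above $N_0$, which limits the number of steps to about $\min(n,s)-N_0$; and to pass from a bound on $\min(n,s)$ to a bound on $n$ I would note that $t_s<q_n<t_{s+1}$ with $t_{s+1}/t_s\to\tau$ forces $q_n/t_s$, and hence $n-s$, to remain bounded over all occurrences. Granting these two facts, the contradiction shows that $Q^nQ^{n+1}$, and by symmetry $T_sT_{s+1}$, can occur only finitely often, which is exactly the assertion of the lemma. I expect this bookkeeping, rather than the descent itself, to be the main obstacle, as it is where the precise Case-2 structure and the strictness of the inequalities genuinely enter.
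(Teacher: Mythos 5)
Your proposal is correct, but it proves the lemma by a genuinely different route than the paper. The paper's proof stays inside the reductio framework of Section 3: it takes a subword $T_sQ^nQ^{n+1}$ or $B^{n-1}_sQ^nQ^{n+1}$, writes out $-d(t_s)$ and $-d(q_{n+1})$ via \eqref{defal}--\eqref{defbe}, subtracts, and uses the standing hypothesis \eqref{contr} at $t=t_s$ and $t=q_{n+1}$ to reach $q_{n+1}<\phi q_{n+1}$ (using $\alpha_{n+2}=\tau$, $\alpha_{n+1}=\tau$). Your argument never touches $d(t)$ or \eqref{contr}: it is a purely arithmetic rigidity statement about the two denominator sequences, obtained by running the Fibonacci recurrence backwards on $g_k=t_{s+k}-q_{n+k}$ and playing the forced growth $|g_{-k}|\geqslant F_{k+1}$ against the decay of the denominators. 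I checked the two points you flag as delicate and they do go through: the signs of $g_0,g_{-1},\dots$ alternate (so the magnitudes genuinely add at each backward step), the descent is legitimate for $\min(n,s)-N_0-O(1)$ steps, and $|n-s|$ is bounded over all occurrences because $1<q_n/t_s<\tau+o(1)$ while $q_n/\tau^n$ and $t_s/\tau^s$ tend to positive constants (both numbers being eventually golden). What each approach buys: the paper's proof is shorter and reuses machinery already set up, but is conditional on the contradiction hypothesis; yours shows the lemma is true unconditionally for any $\alpha,\beta$ equivalent to $\tau$, at the cost of the index bookkeeping. Since the lemma is only ever invoked inside the reductio, the extra generality is not needed, but the proof is valid and arguably more illuminating about why consecutive $Q$'s cannot occur.
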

\begin{proof}
Assume that $W$ has infinitely many subwords $Q^n Q^{n+1}$. Then, it has infinitely many subwords $T_s Q^n Q^{n+1}$ or $B_s^{n-1} Q^n Q^{n+1}$. In both cases we have
$$ q_{n-1} \le t_s < q_n < q_{n+1} < t_{s+1}, $$
so we can deduce
$$ - d(q_{n+1}) = q_{n+1} \alpha_{n+2} + q_n - t_s\beta_{s+1} - t_{s-1}$$
and
$$ - d(t_s) = q_{n} +\frac{q_{n-1}}{\alpha_{n+1}}- t_s\beta_{s+1} - t_{s-1}.$$
By subtracting the second equality from the first we obtain
$$ C(q_{n+1}+ q_n) >C(q_{n+1}+ t_s) > d(t_s) - d(q_{n-1}) = q_{n+1} \alpha_{n+2}  - \frac{q_{n-1}}{\alpha_{n+1}} = \tau q_{n+1} - \phi q_{n-1},$$
or
$$ q_{n+1} <q_{n+1} ( \tau - C) < C q_n + \phi q_{n-1} < \phi ( q_n + q_{n-1} ) =\phi q_{n+1},$$
which is a contradiction. 
In case $W$ has infinitely many subwords $T_s T_{s+1}$ the proof is the same.
\end{proof}
Now let us show that there are only finitely many letters $B^*_*$. Assume that the word $W$ contains letter $B^n_s$ for $n,s$ large enough. Then by Lemma \ref{conseq} the next letter is either $Q^{n+1}$ or $T_{s+1}$. Without loss of generality suppose that it is $Q^{n+1}$. Then by Lemma \ref{conseq=1} and Lemma \ref{qqtt} the next letter is $T_{s+1}$. If the next letter is $B_{s+2}^{n+2}$, then, as $b_{s+2}=a_{n+2}=1$, we have
$$ q_{n+1} + q_n = q_{n+2}= t_{s+2} = t_{s+1}+ t_s $$
and as $q_n = t_s$, we get $q_{n+1} = t_{s+1}$, which is a contradiction.
$B^n_s  Q^{n+1} T_{s+1} T_{s+2}$ is forbidden by Lemma \ref{qqtt}, hence we always have a subword $B^n_s  Q^{n+1} T_{s+1} Q^{n+2}$. Now we can show that a subword $Q^{i}T_{j} Q^{i+1} B_{j+1}^{i+2}$ 
is also forbidden for any $i>n,j>s$, since in the equality
$$t_j+t_{j-1}=t_{j+1}=q_{i+2} = q_{i+1} + q_i $$
the terms on the right side are greater than the corresponding terms on the left. The same argument works with a subword $T_{j-1} Q^{i}T_{j}  B_{j+1}^{i+1}$.

We have shown that for $n$ large enough, $W$ has an infinite subword $Q^{n-1} T_s Q^{n} T_{s+1} \ldots $.\\
In our case this also implies that 
$$
\xi_{n-1} > \eta_s > \xi_{n} > \eta_{s+1} > \xi_{n+1}>\ldots.
$$

At this point we use Lemma \ref{xd} twice: for $\eta_s \in (\xi_n, \xi_{n-1}) $ and for $\xi_n \in (\eta_{s+1}, \eta_{s}) $. Then, if $ \frac{ \eta_s}{\xi_{n}}  \le \sqrt{ \tau}$, we have 
$$
 \frac{ \xi_{n-1}}{\eta_s} \ge \sqrt{\tau } \text{ \,\,\,\,\, and  \,\,\,\,\,  }
  \frac{ \xi_{n}}{\eta_{s+1}} \ge \sqrt{ \tau }.
  $$
  Hence both 
  \begin{equation}\label{option1} d(t_s)=\frac{1}{\eta_s}  - \frac{1}{\xi_{n-1}} \ge    t_s \left( \tau + \frac{t_{s-1}}{t_s} \right) \left( 1- \sqrt{\phi} \right) 
  \end{equation}
  and
    \begin{equation}\label{option2} d(t_{s+1})=\frac{1}{\eta_{s+1}}  - \frac{1}{\xi_{n}} \ge   t_{s+1} \left( \tau + \frac{t_{s}}{t_{s+1}} \right) \left( 1- \sqrt{\phi} \right)
    \end{equation}
  are true.
  As $\beta \sim \tau$, we know that $\frac{t_s}{t_{s+1}}\to\phi$ as $s\to\infty$. Moreover, one of $\frac{t_s}{t_{s+1}}$ and $\frac{t_{s-1}}{t_{s}}$ is less than $\phi$, and another one is greater than $\phi$ for every $s$ big enough. Now from \eqref{option1} and \eqref{option2} we have that
  $$
  d(t_s)>t_s  \sqrt5(1-\sqrt{\phi})= t_s C \text{\,\,\,\,\,\,or\,\,\,\,\,\,}   d(t_{s+1})>t_{s+1}  \sqrt5(1-\sqrt{\phi})= t_{s+1} C.
  $$
   In case $ \frac{ \eta_s}{\xi_{n}}  \ge \sqrt{ \tau}$ we get that $ \frac{ \eta_{s+1}}{\xi_{n+1}}  \ge \sqrt{ \tau}$ and similarly we will come to one of
   $$
  -d(q_n)>q_n  \sqrt5(1-\sqrt{\phi})= q_n C \text{\,\,\,\,\,\,or\,\,\,\,\,\,}   -d(q_{n+1})>q_{n+1}  \sqrt5(1-\sqrt{\phi})= q_{n+1} C
  $$
  being true by the same argument.

In every case we came to a contradiction, hence the assumption \eqref{contr} was wrong. \\

\section{Optimality of Theorem 1}
Now let us prove the second statement of Theorem 1, that is the constant $C$ is optimal. We consider two irrational numbers, $\tau$ from \eqref{constTAU} and
$\theta$, which we will define below.\\
First, for every $\varepsilon > 0$ there exist $U,V \in\mathbb{Z}$, such that
$$ \left | V + \frac{U}{\tau} - \sqrt{\tau} \right | < \varepsilon .$$
Define the sequence $X_n$ as
$$ X_0 = U, \,\,\, X_1 = V, \,\,\, X_{n+1} = X_n + X_{n-1} \text{\, for every\,\,\,} n\ge 1. $$
Then we have
$$ X_n = A \tau^n + B (-\tau)^{-n} \text{,\,\,\,\,\,where\,\,\,\,\,} A=\frac{\tau V + U}{\tau+2}.$$
As $A>0$, there exists $k\in\mathbb{Z_+}$, such that $X_{k-1}, X_k \geqslant 1$. Let
$$\frac{X_{k-1}}{X_k} = [0; b_w,\ldots, b_1], \,\,\,\,\,\,\,\, b_j \in\mathbb{Z_+}$$
and define $\theta$ as 
$$\theta = [0; b_1,\ldots, b_w, \overline{1} ].$$
We can show that 
\begin{equation}\label{keps}
\Bigl | \frac{1}{\psi_\tau(t)} - \frac{1}{\psi_\theta(t)} \Bigl | \leqslant (C+\varepsilon) t
\end{equation}
for all $t$ large enough.\\
Denominator $q_n$ of convergent $p_n/q_n$ to $\tau$ is equal to
$$ q_n = F_n = \frac{1}{\sqrt5} ( \tau^n - (-\tau)^{-n} ).$$
Denominator $s_n$ of convergent $r_n/s_n$ to $\theta$ is equal to
$$ s_n = X_{n-n_0}, \,\,\,\,\, n_0 = k-w $$
for $n$ large enough. Note, that 
$$ \frac{X_n}{F_n} \sim A\sqrt5 = V+ U \tau^{-1}, \,\,\,\,\, | A\sqrt5 - \sqrt\tau | < \varepsilon. $$
We use
$$ q_n = F_n < X_n = s_{n+n_0} < F_{n+1} = q_{n+1} $$
and 
$$ \frac{1}{\psi_\tau(t)} = \frac{1}{\xi_n} = \frac{1}{|| q_n \tau || }\sim \sqrt5 F_n, \,\,\,\,F_n \leqslant t < F_{n+1}, $$
$$ \frac{1}{\psi_\theta(t)} = \frac{1}{\eta_n} = \frac{1}{|| s_{n+n_0} \eta ||}
 \sim \sqrt5 X_n, \,\,\,\,X_n \leqslant t < X_{n+1} $$
 so that
 $$ \frac{\xi_{n-1}}{\eta_n} \sim \frac{F_n}{X_{n-1}} = \sqrt\tau + O(\varepsilon), \,\,\,\,\, \frac{\eta_{n}}{\xi_n} \sim \frac{X_n}{F_{n}} = \sqrt\tau + O(\varepsilon),$$
 and we have
$$ \Bigl | \frac{1}{\psi_\tau(t)} - \frac{1}{\psi_\theta(t)} \Bigl | \sim X_n \sqrt5 \left | 1- \frac{1}{\sqrt\tau} \right |, \,\,\,\,\,\,\,\, X_n \leqslant t<F_{n+1},$$
$$ \Bigl | \frac{1}{\psi_\tau(t)} - \frac{1}{\psi_\theta(t)} \Bigl | \sim F_{n+1} \sqrt5 \left | 1- \frac{1}{\sqrt\tau} \right |, \,\,\,\,\,\,\,\, F_{n+1} \leqslant t<X_{n+1}. \,\,\,\,\,\,\,\square $$

\section*{Acknowledgements}

The author thanks Nikolay Moshchevitin for careful reading and helpful comments.

N. Shulga is a scholarship holder of "BASIS" Foundation for Development of Theoretical Physics and Mathematics and is supported in part by the Moebius Contest Foundation for Young Scientists.

\end{document}